\documentclass[a4paper,12pt]{article}

\usepackage[utf8]{inputenc}
\usepackage[english]{babel}

\usepackage{amsmath,amssymb,amsfonts,amsthm,mathtools}
\usepackage{mathrsfs}

\usepackage{geometry}
\newtheorem{theorem}{Theorem}[section]
\newtheorem{lemma}[theorem]{Lemma}
\newtheorem{proposition}[theorem]{Proposition}

\theoremstyle{definition}
\newtheorem{definition}[theorem]{Definition}

\theoremstyle{remark}
\newtheorem{remark}[theorem]{Remark}

\theoremstyle{definition}
\newtheorem{example}[theorem]{Example}

\newcommand{\Lp}{L^p[0,1]}

\newcommand{\F}{\mathcal{F}_W}

\newcommand{\Nzero}{\mathbb{N}_0}

\usepackage[unicode]{hyperref}
\hypersetup{
  colorlinks=true,
  linkcolor=blue,
  citecolor=blue,
  urlcolor=blue
}

\newcommand{\DOI}[1]{\href{https://doi.org/#1}{\nolinkurl{doi:#1}}}

\title{\textbf{Compactness and Spectral Properties of Multiplier Operators in the Walsh System}}

\author{M. Ruzhansky\thanks{The authors are supported by the FWO Odysseus 1 grant G.0H94.18N: Analysis and Partial Differential Equations and by the Methusalem programme of the Ghent University Special Research Fund (BOF) (Grant number 01M01021). Michael Ruzhansky is supported by the FWO Senior Research Grant G011522N and by the EPSRC grant UKRI3645.}
 \and S. Episkoposian\footnotemark[1] \and R. Yeghoyan\footnotemark[1]}

\date{}

\begin{document}

\maketitle

\begin{abstract}
We investigate compactness and spectral properties of multiplier operators associated with the Walsh system in the spaces $\Lp$, $1<p<\infty$.
Building upon previously established criteria for boundedness of Walsh multipliers, we prove an exact compactness criterion in the $L^p\to L^p$ regime for all $1<p<\infty$(assuming boundedness of the multiplier), and also in the $L^p\to L^2$ regime for $2<p<\infty$.
The key result states that compactness is equivalent to the condition $a_n\to 0$ for the multiplier symbol.
We also examine in detail the point spectrum and derive strict spectral inclusions; in the Hilbert space case $p=2$ we obtain a complete description of the spectrum.
For $p\neq 2$, we emphasize the limitations of transferring "diagonal" arguments and formulate results in a form that does not admit incorrect generalizations.
\end{abstract}

\medskip
\noindent\textbf{Keywords:} Walsh system, dyadic harmonic analysis, multiplier, compact operator, operator spectrum, Walsh–Fourier series.

\medskip
\noindent\textbf{MSC (2020):} 42C10; 47B07; 47B38; 42A45.

\section{Introduction}
\label{sec:intro}

The Walsh function system, first introduced by J.~L.~Walsh~\cite{Walsh1923},
represents one of the fundamental orthonormal systems
based on the dyadic structure of the interval $[0,1]$.
Unlike the trigonometric system,
Walsh functions take only values $\pm1$ almost everywhere,
a fact that was thoroughly studied already in early works by R.~Paley~\cite{Paley1932}
and N.~Fine~\cite{Fine1949}.
Precisely this discrete-dyadic nature,
connected with the binary structure of the argument,
substantially influences the analytic and operator properties
arising in expansions with respect to the Walsh system,
and leads to noticeable differences from the classical trigonometric case.

The functional-analytic theory of Walsh series and transforms
was systematically developed in the monograph
by B.~Golubov, A.~Efimov, and V.~Skvortsov~\cite{GolubovEfimovSkvortsov1991},
as well as in the book by Schipp, Wade, and Simon~\cite{SchippWadeSimon1990}.
In particular, it has been established that the Walsh system forms
a Schauder basis in the spaces $L^p[0,1]$ for $1<p<\infty$;
however, unlike the case $p=2$,
it is not an unconditional basis for $p\neq2$.
This fact, well known in the theory of Banach spaces
(see, e.g.,~\cite{Wojtaszczyk}),
plays a fundamental role in the investigation of operators
that are diagonal with respect to this basis.
Moreover, for Walsh–Fourier coefficients
analogues of the Hausdorff–Young inequality hold,
and the synthesis operator is well-defined in $L^p$,
which makes the Walsh system especially convenient
for studying multiplier operators.

Multiplier operators,
arising in the context of orthonormal expansions,
are classical objects of harmonic analysis.
For trigonometric series,
fundamental results by Marcinkiewicz~\cite{Marcinkiewicz1939},
Hirschman~\cite{Hirschman1961},
and Zygmund~\cite{Zygmund1959}
have shown that boundedness, compactness,
and spectral properties of multipliers
in $L^p$ spaces possess a subtle
and often counterintuitive structure.
The general Banach-space approach
to the theory of $L^p$-multipliers
was formulated in the work of de~Leeuw~\cite{deLeeuw1965}
and subsequently developed within the framework of abstract operator theory.

Historically, the investigation of operators
diagonal with respect to orthonormal systems
goes back to the classical works of F.~Riesz~\cite{Riesz1910},
where the foundations of spectral analysis of linear operators
in functional spaces were laid,
and to the Tauberian theory of N.~Wiener~\cite{Wiener1932},
which showed that asymptotic properties of expansion coefficients
substantially influence the invertibility and spectrum of the corresponding operators.
These ideas later became the foundation
for the development of multiplier and diagonal operator theory
in $L^p$ spaces.

From the functional-analytic viewpoint,
multiplier operators in the Walsh system
are naturally interpreted as diagonal operators
with respect to a fixed Schauder basis.
General properties of such operators,
including compactness criteria,
are closely related to the behavior of their diagonal coefficients
and have been thoroughly studied in the theory of Banach spaces
(see the classical monograph by Lindenstrauss and Tzafriri~\cite{LT}).
However, the concrete realization of these abstract principles
substantially depends on the structure of the basis,
which makes the Walsh system a special and nontrivial example.

In a recent work by the authors~\cite{BoundednessWalsh},
strict conditions for boundedness
of multiplier operators in the Walsh system
in the spaces $L^p[0,1]$ were obtained.
The present article naturally continues this investigation,
focusing on compactness and spectral properties of such operators.
We emphasize that for $p\neq2$
operators in $L^p$ spaces
act in a non-Hilbert environment,
and therefore many spectral statements
that are obvious in $L^2$
do not automatically transfer
to Banach spaces
(see, e.g.,~\cite{SteinHA,Conway,Fefferman1971}).

The main goal of the present work is to obtain
strict and fully correct compactness criteria
for multiplier operators in the Walsh system,
as well as a careful analysis of their spectrum
in the form of guaranteed inclusions.
At the same time, we consciously avoid
using implicit Hilbert-space arguments,
which allows us to clearly distinguish
the Banach-space nature
of the considered operator effects.

\section{Preliminaries and notation}
\label{sec:prelim}

\subsection{Walsh system and Walsh–Fourier coefficients}

\begin{definition}[Rademacher and Walsh--Paley functions]
For $k\in\mathbb{N}_0$ define the Rademacher functions on $[0,1)$ by
\[
r_k(x):=\operatorname{sign}\bigl(\sin(2^{k+1}\pi x)\bigr),
\qquad x\in[0,1).
\]
If $n=\sum_{k=0}^m b_k 2^k$ is the dyadic expansion of $n$ with $b_k\in\{0,1\}$,
then the Walsh--Paley functions are defined by
\[
W_n(x):=\prod_{k=0}^m r_k(x)^{\,b_k},\qquad x\in[0,1).
\]
\end{definition}
\noindent
Each $W_n$ takes only the values $\pm1$ a.e. and $\{W_n\}_{n\ge0}$ is an orthonormal system in $L^2[0,1]$.

The Walsh functions satisfy
\[
\int_0^1 W_n(x)W_m(x)\,dx = \delta_{nm}.
\]

For $f\in L^1[0,1]$ the Walsh–Fourier coefficients are defined by the formula
\[
\hat{f}(n) := \int_0^1 f(x)W_n(x)\,dx,\qquad n\in\Nzero.
\]
We also use the notation $\F f := (\hat{f}(n))_{n\ge 0}$.

It is known (see~\cite{GolubovEfimovSkvortsov1991,SchippWadeSimon1990}) that
for $1<p<\infty$ the Walsh system forms a Schauder basis in $\Lp$,
and hence for each $f\in\Lp$ the Walsh–Fourier series converges to $f$ in the $\Lp$ norm.

\subsection{Multiplier operators}

\begin{definition}[Walsh multiplier]
Let $a=\{a_n\}_{n\ge0}$ be a fixed sequence of complex numbers.
The \emph{multiplier operator in the Walsh system} is the operator
formally defined by the equality
\[
T_a f = \sum_{n=0}^{\infty} a_n\, \hat{f}(n)\, W_n,
\]
on its natural domain of definition.
\end{definition}

Depending on the purpose, we will be interested in boundedness regimes:
\[
T_a:\Lp\to\Lp \quad\text{or}\quad T_a: L^p[0,1]\to L^2[0,1].
\]
Criteria for boundedness in the respective regimes are collected in~\cite{BoundednessWalsh}.
In this article we assume that $T_a$ is already defined as a bounded operator
in the considered regime, and we investigate its compactness and spectrum.

\subsection{Analysis and synthesis inequalities (Walsh)}
\label{subsec:HYsynthesis}

We fix two standard estimates.

\begin{lemma}[Hausdorff–Young for Walsh]
\label{lem:HY}
Let $1<p\le 2$ and $p'=\frac{p}{p-1}$.
Then there exists a constant $C_{HY}(p)>0$ such that for all $f\in \Lp$
\[
\|\F f\|_{\ell^{p'}} \le C_{HY}(p)\,\|f\|_{L^p}.
\]
\end{lemma}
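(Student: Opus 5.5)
The plan is to obtain the inequality by Riesz--Thorin interpolation between the two endpoint estimates $p=1$ and $p=2$ for the analysis operator $\F$. The operator $\F$ is linear and maps functions on $[0,1]$ (Lebesgue measure) to sequences on $\Nzero$ (counting measure). Interpolating between its two endpoint bounds will give the claim, in fact with $C_{HY}(p)=1$.

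The first step is the endpoint $p=1$, $p'=\infty$. Since $|W_n(x)|=1$ almost everywhere, for every $f\in L^1[0,1]$ and every $n$ we have
\[
|\hat f(n)|\le \int_0^1 |f(x)|\,|W_n(x)|\,dx=\|f\|_{L^1}.
\]
Hence $\|\F f\|_{\ell^\infty}\le \|f\|_{L^1}$.

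The second step is the endpoint $p=2$, $p'=2$. By the orthonormality relation $\int_0^1 W_nW_m\,dx=\delta_{nm}$ and Bessel's inequality, $\|\F f\|_{\ell^2}\le\|f\|_{L^2}$. Completeness of the Walsh system would upgrade this to Parseval's identity, but it is not needed.

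Third, I apply the Riesz--Thorin theorem, which is available because both spaces are $\sigma$-finite measure spaces and $\F$ is linear on $L^1\cap L^2$, in particular on simple functions. For $1<p<2$ choose $\theta\in(0,1)$ with $\frac1p=\frac{1-\theta}{1}+\frac{\theta}{2}$. Then $\frac{1}{p'}=1-\frac1p=\frac{\theta}{2}=\frac{1-\theta}{\infty}+\frac{\theta}{2}$, so the interpolated target exponent is exactly $p'$. This gives
\[
\|\F f\|_{\ell^{p'}}\le 1^{1-\theta}1^{\theta}\|f\|_{L^p}=\|f\|_{L^p}
\]
for simple $f$.

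Finally I extend the bound to all $f\in\Lp$ by density of simple functions. For each fixed $n$ the map $f\mapsto\hat f(n)$ is continuous on $L^p\subset L^1$, so the coefficients converge pointwise along an approximating sequence. Fatou's lemma on $\ell^{p'}$ then preserves the inequality in the limit.

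No step here is a serious obstacle. The only point needing care is this last density and limit argument, which justifies applying Riesz--Thorin to an operator that is initially given on the intersection of the two endpoint spaces.
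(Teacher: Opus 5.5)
Your proof is correct and complete. The two endpoint bounds $\|\F f\|_{\ell^\infty}\le\|f\|_{L^1}$ and $\|\F f\|_{\ell^2}\le\|f\|_{L^2}$ hold as you state, the exponent bookkeeping ($\theta=2/p'$) is right, and the coordinatewise-convergence plus Fatou step correctly extends the bound from simple functions to all of $L^p$.

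The paper gives no argument of its own. It only cites the Hausdorff--Young inequality on a compact abelian group, applied to the dyadic group whose characters are the Walsh functions. The standard proof of that general result is exactly your Riesz--Thorin interpolation between $L^1\to\ell^\infty$ and $L^2\to\ell^2$, so the two routes agree in substance.

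Your explicit version has three advantages. It is self-contained. It uses only $|W_n|=1$ a.e.\ and orthonormality, so it works for any uniformly bounded orthonormal system without identifying $[0,1]$ with the dyadic group. It also gives the explicit constant $C_{HY}(p)=1$, which is optimal since $f\equiv1$ gives equality.
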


\begin{proof}
This is a special case of the Hausdorff–Young inequality on a compact abelian group,
applied to the dyadic group and its character represented by the Walsh system;
see, e.g.,~\cite[Ch.~II]{GolubovEfimovSkvortsov1991} or~\cite[Ch.~2]{SchippWadeSimon1990}.
\end{proof}

\begin{lemma}[Synthesis in $\Lp$ for Walsh]
\label{lem:synthesis}
Let $1<p<2$ and $p'=\frac{p}{p-1}$.
Then there exists a constant $C_{S}(p)>0$ such that for any sequence
$c=\{c_n\}\in\ell^{p'}$ the series $\sum_{n\ge0} c_nW_n$ converges in $\Lp$ and
\[
\left\|\sum_{n=0}^{\infty} c_nW_n\right\|_{L^p}\le C_S(p)\,\|c\|_{\ell^{p'}}.
\]
\end{lemma}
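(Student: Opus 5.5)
The natural route is duality with Lemma~\ref{lem:HY}. For a finite sequence $c$ and $g\in L^{p'}[0,1]$ we have
\[
\int_0^1\Bigl(\sum_{n\le N} c_nW_n\Bigr)g\,dx=\sum_{n\le N}c_n\hat g(n).
\]
The plan is to bound the right-hand side by Hölder and then by a Hausdorff--Young estimate for $\hat g$, take the supremum over $\|g\|_{L^{p'}}\le1$, and finally pass from finite sums to the full series using the Cauchy criterion in $\ell^{p'}$. The obstacle appears at the middle step. Since $g\in L^{p'}$ with $p'>2$, we control $\hat g$ only in $\ell^2$ (by Bessel), not in $\ell^{p}$, because Hausdorff--Young fails above exponent $2$. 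So the duality argument closes only for $c\in\ell^{2}$, or, by genuine duality with Lemma~\ref{lem:HY}, for $c\in\ell^{p}$ with the conclusion in $L^{p'}$.

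Before trying harder, I would test the estimate on lacunary coefficients, and this test shows the statement cannot be proved as written. Take $c_{2^k}=k^{-1/2}$ and $c_n=0$ otherwise. Then $c\in\ell^{p'}$ because $p'>2$, while $c\notin\ell^2$. Since $W_{2^k}=r_k$, Khintchine's inequality for the Rademacher system gives, for every $0<p<\infty$,
\[
\Bigl\|\sum_{k\le K}c_{2^k}r_k\Bigr\|_{L^p}\asymp_p\Bigl(\sum_{k\le K}c_{2^k}^2\Bigr)^{1/2}\to\infty .
\]
So the partial sums neither converge in $L^p$ nor stay bounded by $C_S(p)\|c\|_{\ell^{p'}}$. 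The inequality of Lemma~\ref{lem:synthesis} is therefore false for every $1<p<2$.

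I would therefore replace the statement by one of its correct forms, each proved by the plan above:
\[
\Bigl\|\sum_n c_nW_n\Bigr\|_{L^{p'}}\le C_{HY}(p)\,\|c\|_{\ell^{p}},\qquad 1<p\le2,
\]
and $\|\sum_n c_nW_n\|_{L^{p}}\le\|c\|_{\ell^2}$ for $1<p\le2$. The first follows by duality from Lemma~\ref{lem:HY}, or by Riesz--Thorin between $\ell^1\to L^\infty$ and Plancherel. The second follows from Parseval and $\|\cdot\|_{L^p}\le\|\cdot\|_{L^2}$ on $[0,1]$.

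The main obstacle is then not technical. It is to check whether later arguments in the paper really need $\ell^{p'}\to L^p$ synthesis, and to reroute them through the $\ell^2$ or $\ell^{p}\to L^{p'}$ versions.
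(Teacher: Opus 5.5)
Your conclusion is right: the statement is false as written. The paper gives no argument for it beyond citing it as a ``known fact'', so your refutation is the substantive content here.

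Your lacunary test is a valid counterexample once you index from $k\ge 1$ (at $k=0$ the coefficient $k^{-1/2}$ is undefined). With the paper's definition, $W_{2^k}=r_k$. The sequence lies in $\ell^{p'}$ because $\sum_{k\ge1}k^{-p'/2}<\infty$ for $p'/2>1$. For $2^K\le N<2^{K+1}$ the $N$th partial sum equals $\sum_{k=1}^K k^{-1/2}r_k$. By the lower Khintchine inequality, valid for every $0<p<\infty$, its $L^p$ norm is at least $A_p\bigl(\sum_{k=1}^K k^{-1}\bigr)^{1/2}\to\infty$. Apply the claimed estimate to the finite truncations, whose $\ell^{p'}$ norms stay below $\|c\|_{\ell^{p'}}$: this forces $C_S(p)=\infty$. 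The same computation rules out convergence of the series in $L^p$. Your diagnosis of why duality stalls is also correct: for $g\in L^{p'}$ with $p'>2$, $\hat g$ is controlled only in $\ell^2$. Your two corrected estimates are right as well: $\ell^{p}\to L^{p'}$ by duality with Lemma~\ref{lem:HY}, and $\ell^2\to L^p$ by Parseval.

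There is also a structural way to see the failure, which bears on your last paragraph. Composed with Lemma~\ref{lem:HY}, the stated bound gives Lemma~\ref{lem:linftyMultiplier}(a), i.e.\ a uniform $L^p$ bound for every multiplier with symbol in $\{\pm1\}^{\mathbb{N}_0}$. That would make the Walsh system an unconditional basis of $L^p$, contradicting the fact recalled in the paper's own introduction. Your explicit example is more elementary and self-contained. The unconditionality argument relies on the non-unconditionality theorem, but it shows immediately how far the defect propagates.

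In particular, rerouting through your corrected estimates cannot rescue the downstream steps. Two of them are themselves $\ell^\infty$-to-multiplier estimates: Lemma~\ref{lem:linftyMultiplier}(a), and the tail bound $\|T_a-T_a^{(N)}\|_{L^p\to L^p}\le C\sup_{n>N}|a_n|$ in the proof of Theorem~\ref{thm:compactLp}. Testing them on finitely supported sign sequences shows they fail for $p\neq2$. Neither the $\ell^2\to L^p$ nor the $\ell^{p}\to L^{p'}$ bound can produce them, so those steps would need a genuinely different argument.
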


\begin{proof}
This statement is a known fact of Walsh series theory;
see \cite[Ch.~II, section on Walsh transform and synthesis]{GolubovEfimovSkvortsov1991}
and also \cite{SchippWadeSimon1990}.
\end{proof}

\begin{remark}
In what follows it is important to keep track of which constants appear in the estimates.
We will explicitly indicate the dependence of the final constants on $C_{HY}(p)$ and $C_S(p)$,
as well as on the norm $\|T_a\|$ as a bounded operator (when required).
\end{remark}

\subsection{Bounded symbols and boundedness of multipliers}
\label{subsec:linfty}

\begin{lemma}
\label{lem:linftyMultiplier}
Let $1<p<\infty$ and $b=\{b_n\}_{n\ge0}\in \ell^\infty$.
Then the multiplier $T_b$ is well-defined on the dense subspace
of finite linear combinations of Walsh functions and extends by continuity
to a bounded operator $T_b:\Lp\to\Lp$.
Moreover:
\begin{enumerate}
\item[(a)] if $1<p<2$, then
\[
\|T_b\|_{L^p\to L^p}\le C_S(p)\,C_{HY}(p)\,\|b\|_{\ell^\infty};
\]
\item[(b)] if $2<p<\infty$, then
\[
\|T_b\|_{L^p\to L^p}\le C_S(p')\,C_{HY}(p')\,\|b\|_{\ell^\infty},
\quad p'=\frac{p}{p-1}\in(1,2).
\]
\end{enumerate}
\end{lemma}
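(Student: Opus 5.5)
The plan is to first prove the estimate on the dense subspace $\mathcal{P}$ of finite Walsh polynomials, where $T_b f=\sum_{n\le N} b_n\hat f(n)W_n$ is a finite sum and needs no convergence argument. The extension to all of $\Lp$ then follows from the standard bounded-linear-extension theorem, since $\mathcal{P}$ is dense in $\Lp$ (the Walsh system is a Schauder basis there). This splits into three cases: $p=2$, $1<p<2$, and $2<p<\infty$.

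For $p=2$ I will simply use Parseval: $\|T_b f\|_{L^2}=\|(b_n\hat f(n))\|_{\ell^2}\le\|b\|_{\ell^\infty}\|f\|_{L^2}$. For $1<p<2$ I will chain the two lemmas of Subsection~\ref{subsec:HYsynthesis}. Applying Lemma~\ref{lem:synthesis} to the sequence $c_n=b_n\hat f(n)$, then the pointwise bound $|b_n\hat f(n)|\le\|b\|_{\ell^\infty}|\hat f(n)|$, and then Lemma~\ref{lem:HY}, gives
\[
\|T_b f\|_{L^p}\le C_S(p)\,\|(b_n\hat f(n))\|_{\ell^{p'}}
\le C_S(p)\,\|b\|_{\ell^\infty}\,\|\F f\|_{\ell^{p'}}
\le C_S(p)C_{HY}(p)\,\|b\|_{\ell^\infty}\|f\|_{L^p}.
\]
This is exactly (a). Both lemmas are invoked only with the exponent pair $(p,p')$ for which they are stated.

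For $2<p<\infty$ I will argue by duality rather than repeat the chain, because Lemma~\ref{lem:HY} and Lemma~\ref{lem:synthesis} are only available for exponents below $2$. The Walsh functions are real-valued, so for $f,g\in\mathcal{P}$ the bilinear pairing satisfies
\[
\int_0^1 (T_b f)\,g\,dx=\sum_n b_n\hat f(n)\hat g(n)=\int_0^1 f\,(T_b g)\,dx.
\]
Hence $T_b$ is formally self-transposed. Since $\mathcal{P}$ is dense in $L^{p'}[0,1]$, I can write
\[
\|T_b f\|_{L^p}=\sup\Bigl\{\Bigl|\int_0^1 (T_bf)\,g\Bigr| : g\in\mathcal{P},\ \|g\|_{L^{p'}}\le1\Bigr\}.
\]
Applying H\"older's inequality together with case (a) at the exponent $p'\in(1,2)$ to $\|T_b g\|_{L^{p'}}$ then yields $\|T_bf\|_{L^p}\le C_S(p')C_{HY}(p')\|b\|_{\ell^\infty}\|f\|_{L^p}$, which is (b).

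The main obstacle is the duality step. One must check that the norming supremum may be restricted to Walsh polynomials $g$. One must also check that the transposition identity holds on $\mathcal{P}$ without any convergence issues; this is clear, because all sums are finite and orthonormality gives the middle expression. All the genuine analytic content is concentrated in the two quoted lemmas, which are taken as given.
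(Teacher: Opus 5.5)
Your proof is correct and follows the paper's route: for (a) you use the same synthesis, then $\ell^\infty$, then Hausdorff--Young chain, and for (b) you dualize to case (a) at the exponent $p'$. The differences are minor. You bound $T_b$ directly on $\mathcal P$ via the bilinear pairing, using that the $W_n$ are real so $T_b$ is its own transpose, whereas the paper defines $T_b$ as the Banach adjoint of $T_{\overline b}$ and then checks consistency on $\mathcal P$. You also settle $p=2$ by Parseval, a case the paper's proof does not address.
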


\begin{proof}
We first define $T_b$ on the dense subspace
\[
\mathcal P:=\mathrm{span}\{W_n:\,n\ge0\}
\]
of finite Walsh polynomials by
\[
T_b\Big(\sum_{n=0}^N c_n W_n\Big):=\sum_{n=0}^N b_n c_n W_n.
\]

\smallskip
\textbf{Case $1<p<2$.}
Let $f\in\mathcal P$ and write $f=\sum_{n=0}^N \hat f(n)\,W_n$.
Then
\[
T_b f=\sum_{n=0}^N b_n\,\hat f(n)\,W_n.
\]
By the synthesis estimate of Lemma~\ref{lem:synthesis} and the bound
$\|(b_n\hat f(n))\|_{\ell^{p'}}\le \|b\|_{\ell^\infty}\,\|\F f\|_{\ell^{p'}}$ we get
\[
\|T_b f\|_{L^p}
\le C_S(p)\,\|(b_n\hat f(n))\|_{\ell^{p'}}
\le C_S(p)\,\|b\|_{\ell^\infty}\,\|\F f\|_{\ell^{p'}}.
\]
Applying Hausdorff--Young (Lemma~\ref{lem:HY}) yields
\[
\|T_b f\|_{L^p}\le C_S(p)\,C_{HY}(p)\,\|b\|_{\ell^\infty}\,\|f\|_{L^p},
\qquad f\in\mathcal P.
\]
Since $\mathcal P$ is dense in $L^p[0,1]$, this estimate shows that $T_b$
extends uniquely by continuity to a bounded operator on $L^p[0,1]$, and
\[
\|T_b\|_{L^p\to L^p}\le C_S(p)\,C_{HY}(p)\,\|b\|_{\ell^\infty}.
\]

\smallskip
\textbf{Case $2<p<\infty$.}
Let $p'=\frac{p}{p-1}\in(1,2)$.

\emph{Step 1: boundedness on $L^{p'}$.}
By the already proved case $1<p'<2$ applied to $\overline b$, the operator
$T_{\overline b}$ extends uniquely to a bounded operator on $L^{p'}[0,1]$ and
\[
\|T_{\overline b}\|_{L^{p'}\to L^{p'}}
\le C_S(p')\,C_{HY}(p')\,\|b\|_{\ell^\infty}.
\]

\emph{Step 2: definition and boundedness on $L^{p}$ by duality.}
Define $T_b:L^p[0,1]\to L^p[0,1]$ as the Banach-space adjoint of
$T_{\overline b}$, i.e.
\[
\langle T_b f, g\rangle := \langle f, T_{\overline b} g\rangle,
\qquad f\in L^p,\ g\in L^{p'}.
\]
Then $T_b$ is bounded and
\[
\|T_b\|_{L^p\to L^p}=\|T_{\overline b}\|_{L^{p'}\to L^{p'}}.
\]

\emph{Step 3: consistency with the multiplier on $\mathcal P$.}
For $f,g\in\mathcal P$, using orthogonality of $\{W_n\}$,
\[
\langle T_b f, g\rangle
=\sum_{n\ge0} b_n\,\hat f(n)\,\overline{\hat g(n)},
\]
so on $\mathcal P$ this operator coincides with the formal Walsh multiplier
$\sum_{n\ge0} b_n\hat f(n)W_n$. This proves item \textup{(b)}.

\end{proof}

\subsection{Norm of the difference of Walsh functions}
\label{subsec:normdiff}

The following lemma strengthens the necessity proof in the compactness criterion.

\begin{lemma}
\label{lem:WalshDistance}
Let $1\le p<\infty$ and $n\neq m$.
Then
\[
\|W_n-W_m\|_{L^p} = 2^{\,1-\frac1p}.
\]
\end{lemma}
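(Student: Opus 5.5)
The plan is to compute the norm exactly from the pointwise structure of Walsh functions. Since each $W_n$ takes only the values $\pm1$ a.e., the difference $W_n-W_m$ takes only the values $0$ and $\pm2$ a.e. Hence $|W_n-W_m|^p = 2^p\,\mathbf 1_E$, where $E=\{x: W_n(x)\neq W_m(x)\}$. This gives
\[
\|W_n-W_m\|_{L^p}^p = 2^p\,|E|.
\]
So everything reduces to showing that $|E|=\tfrac12$ whenever $n\neq m$. Then $\|W_n-W_m\|_{L^p}=2\cdot 2^{-1/p}=2^{1-\frac1p}$.

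To compute $|E|$ I will use the fact that $W_nW_m$ is again $\pm1$ a.e. Indeed, $W_nW_m = -1$ exactly on $E$ and $+1$ off $E$, so
\[
\int_0^1 W_n W_m\,dx = (1-|E|) - |E| = 1-2|E|.
\]
By the orthonormality relation stated in the preliminaries, $\int_0^1 W_nW_m\,dx=\delta_{nm}=0$ for $n\neq m$. This gives $|E|=\tfrac12$.

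Alternatively, I could note that $W_nW_m=W_{n\oplus m}$ (dyadic addition) with $n\oplus m\ge1$, which has mean zero. That route is not needed, since orthonormality already suffices.

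There is essentially no obstacle. The only point requiring care is the a.e. qualification: the Rademacher functions defined via $\operatorname{sign}\sin$ vanish at finitely many dyadic points, so the identity $|W_n-W_m|\in\{0,2\}$ holds only off a null set. This does not affect the integrals. The case $p=1$ is included and gives norm $1$, consistent with the formula.
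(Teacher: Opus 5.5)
Your proof is correct and follows essentially the same route as the paper: you reduce to $\|W_n-W_m\|_{L^p}^p=2^p|E|$ with $E=\{W_n\neq W_m\}$, then deduce $|E|=\tfrac12$ from $\int_0^1 W_nW_m\,dx=0$. Your identity $\int_0^1 W_nW_m\,dx=1-2|E|$ simply writes out the step the paper states without computation. The paper cites $W_nW_m=W_{n\oplus m}$ only to justify the zero mean, which orthonormality already gives.
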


\begin{proof}
Since $W_n,W_m$ take values $\pm 1$ almost everywhere,
$W_n-W_m$ takes values $0$ or $\pm 2$ almost everywhere.
Because $W_nW_m = W_{n\oplus m}$ (operation $\oplus$ is bitwise XOR of indices),
$W_nW_m$ is a nontrivial Walsh function for $n\neq m$ and has zero mean:
\[
\int_0^1 W_n(x)W_m(x)\,dx = 0.
\]
From the $\{\pm1\}$-valuedness it follows that the sets
\[
E:=\{x\in[0,1]: W_n(x)=W_m(x)\}=\{W_nW_m=1\},
\]
\[
F:=\{x\in[0,1]: W_n(x)\neq W_m(x)\}=\{W_nW_m=-1\}
\]
have equal measures: $|E|=|F|=\tfrac12$.
On $E$ we have $W_n-W_m=0$, while on $F$ we have $|W_n-W_m|=2$.
Consequently,
\[
\|W_n-W_m\|_{L^p}^p
=
\int_0^1 |W_n-W_m|^p\,dx
=
\int_F 2^p\,dx
=
2^p\cdot \frac12
=
2^{p-1}.
\]
Taking the $p$th root, we obtain
$\|W_n-W_m\|_{L^p}=2^{1-\frac1p}$.
\end{proof}
\begin{remark}
The statement of Lemma~\ref{lem:WalshDistance}
extends to $p=\infty$. Indeed, since $W_n,W_m\in\{\pm1\}$ a.e.,
the difference $W_n-W_m$ takes only values $0$ and $\pm2$ a.e., and for $n\ne m$
the set $\{x: W_n(x)\ne W_m(x)\}$ has measure $1/2$. Hence
\[
\|W_n-W_m\|_{L^\infty}=2,
\]
in agreement with $2^{\,1-1/p}\to2$ as $p\to\infty$.
\end{remark}

\section{Compactness of multiplier operators}
\label{sec:compact}

\subsection{Compactness criterion in the $L^p\to L^p$ regime for $1<p<2$}

\begin{theorem}
\label{thm:compactLp}
Let $1<p<\infty$ and $T_a:\Lp\to\Lp$ be a bounded Walsh multiplier operator.
Then the following conditions are equivalent:
\begin{enumerate}
\item[(i)] $T_a$ is compact in $\Lp$;
\item[(ii)] $a_n\to 0$ as $n\to\infty$.
\end{enumerate}
\end{theorem}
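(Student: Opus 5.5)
The plan is to prove the two implications separately. For (ii)$\Rightarrow$(i) we approximate $T_a$ in operator norm by finite-rank truncations and control the tail with Lemma~\ref{lem:linftyMultiplier}. For (i)$\Rightarrow$(ii) we test $T_a$ on the weakly null sequence of Walsh functions.

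\emph{Sufficiency (ii)$\Rightarrow$(i).} For $N\in\mathbb{N}$ let $a^{(N)}_n:=a_n$ for $n\le N$ and $a^{(N)}_n:=0$ for $n>N$. Then
\[
T_{a^{(N)}}f=\sum_{n=0}^N a_n\hat f(n)W_n .
\]
This operator is bounded on $\Lp$, because $|\hat f(n)|\le\|f\|_{L^1}\le\|f\|_{L^p}$, and it has rank at most $N+1$. Set $b^{(N)}:=a-a^{(N)}$, so $b^{(N)}$ vanishes for $n\le N$ and equals $a_n$ for $n>N$. It lies in $\ell^\infty$ with $\|b^{(N)}\|_{\ell^\infty}=\sup_{n>N}|a_n|$, and this tends to $0$ by (ii).

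The operators $T_a-T_{a^{(N)}}$ and $T_{b^{(N)}}$ are both bounded on $\Lp$. They coincide on the dense subspace $\mathcal P$ of Walsh polynomials, so they are equal. Lemma~\ref{lem:linftyMultiplier} then gives
\[
\|T_a-T_{a^{(N)}}\|_{L^p\to L^p}\le C(p)\sup_{n>N}|a_n|\to0 .
\]
Here $C(p)=C_S(p)C_{HY}(p)$ for $p<2$ and $C_S(p')C_{HY}(p')$ for $p>2$; for $p=2$, Parseval gives $C=1$ directly. Hence $T_a$ is a norm limit of finite-rank operators, and is therefore compact.

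\emph{Necessity (i)$\Rightarrow$(ii).} Suppose $a_n\not\to0$, so there are $\delta>0$ and indices $n_k\to\infty$ with $|a_{n_k}|\ge\delta$. The sequence $\{W_{n_k}\}$ is bounded in $\Lp$, since each has norm $1$. It is weakly null: for $g\in L^{p'}\subset L^1$ we have $\langle W_{n},g\rangle=\hat g(n)$ (up to conjugation), and Walsh coefficients of $L^1$ functions tend to $0$. To see the last point, approximate $g$ in $L^1$ by a Walsh polynomial, using density of $\mathcal P$ in $L^1$ via dyadic step functions, and use $|\hat h(n)|\le\|h\|_{L^1}$.

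A compact operator maps weakly null sequences to norm-null sequences. Since $T_aW_{n_k}=a_{n_k}W_{n_k}$, we get $\|T_aW_{n_k}\|_{L^p}=|a_{n_k}|\to0$, which contradicts $|a_{n_k}|\ge\delta$. As a backup avoiding weak topologies, one can use Lemma~\ref{lem:WalshDistance}. Pass to a subsequence along which $a_{n_k}\to\alpha$ with $|\alpha|\ge\delta$, which is possible because $a$ is bounded: $|a_n|=\|T_aW_n\|\le\|T_a\|$. Then for $k\ne l$, $\|a_{n_k}W_{n_k}-a_{n_l}W_{n_l}\|$ is bounded below by roughly $|\alpha|\,2^{1-1/p}$, so the image sequence has no Cauchy subsequence.

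\emph{Expected difficulty.} The only delicate points are bookkeeping ones. First, the identity $T_a-T_{a^{(N)}}=T_{b^{(N)}}$ holds as bounded extensions, and in the case $p>2$, where $T_b$ was defined by duality in Lemma~\ref{lem:linftyMultiplier}, this must be checked via the consistency on $\mathcal P$ established in Step~3. Second, $\hat g(n)\to0$ must be verified for all $g\in L^{p'}$, including $p'<2$. I expect the first point to be the main place requiring care.
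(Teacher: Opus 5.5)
Your necessity argument is correct for every $1<p<\infty$, and it differs from the paper's. The paper rotates phases, $f_k=u_kW_{n_k}$, so that $T_af_k=|a_{n_k}|W_{n_k}$, and computes the separation $\|T_af_k-T_af_j\|_{L^p}\ge 2^{1-1/p}\varepsilon$ explicitly. You instead use that $\{W_n\}$ is weakly null (Walsh Riemann--Lebesgue) and that compact operators are completely continuous; your backup is a variant of the paper's separation argument.

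The genuine gap is in sufficiency for $p\neq2$, and it is not the bookkeeping point you flagged. The tail bound $\|T_{b^{(N)}}\|_{L^p\to L^p}\le C(p)\sup_{n>N}|a_n|$ comes from Lemma~\ref{lem:linftyMultiplier}, which is false for $p\neq2$. It asserts that every bounded symbol, in particular every $\pm1$ sequence, is an $L^p$-multiplier with norm $\lesssim\|b\|_{\ell^\infty}$. That would make the Walsh system an unconditional basis of $L^p$, which the paper's own introduction says it is not. The lemma rests on Lemma~\ref{lem:synthesis}, which is false as stated. For $1<p<2$, let $c_{2^k}=k^{-1/2}$ ($k\ge1$) and $c_n=0$ otherwise; then $c\in\ell^{p'}$. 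But $W_{2^k}=r_k$, so Khintchine's inequality gives $\|\sum_{1\le k\le K}c_{2^k}W_{2^k}\|_{L^p}\asymp(\sum_{1\le k\le K}k^{-1})^{1/2}\to\infty$. (Hausdorff--Young synthesis maps $\ell^{p}$ to $L^{p'}$, not $\ell^{p'}$ to $L^{p}$.) The paper's proof of (ii)$\Rightarrow$(i) uses exactly this estimate: synthesis plus Hausdorff--Young for $p<2$, then duality. So you have reproduced its route together with its error. In the sufficiency direction, only your $p=2$ case via Parseval is sound.

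This cannot be repaired, because (ii)$\Rightarrow$(i) is false for every $p\neq2$. Let $p>2$, and take integers $k_1<k_2<\cdots$ and finite sets $I_j\subset(k_{j-1},k_j)$, with $k_0:=0$. Normalize a finitely supported $\{0,\pm1\}$ symbol of multiplier norm $\ge j$, which exists by non-unconditionality, and move it to the coordinates $I_j$; relabelling Rademacher coordinates is an isometry of $L^p$. This gives a symbol $\nu_j$ supported on the indices $n$ whose binary digits lie in $I_j$, with $\|T_{\nu_j}\|_{L^p\to L^p}=1$ and $\|\nu_j\|_{\ell^\infty}\le 1/j$. Put $a_{2^{k_j}+n}:=\nu_j(n)$ for such $n$ and $a_n:=0$ otherwise, so $a_n\to0$. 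Let $E_k$ be the conditional expectation onto dyadic intervals of length $2^{-k}$, and set $g_k:=E_k(r_kf)$, so that $r_kg_k=E_{k+1}f-E_kf$. Since $W_{2^{k_j}+n}=r_{k_j}W_n$,
\[
T_af=\sum_j X_j,\qquad X_j:=r_{k_j}\,T_{\nu_j}\,\mathbb{E}\bigl[g_{k_j}\,\big|\,r_i,\ i\in I_j\bigr].
\]
The $X_j$ are independent with mean zero, and $\|X_j\|_{L^p}\le\|g_{k_j}\|_{L^p}$, $\|X_j\|_{L^2}\le\|g_{k_j}\|_{L^2}$. Rosenthal's inequality and Paley's square-function inequality then give
\[
\|T_af\|_{L^p}\lesssim\max\Bigl\{\Bigl(\sum_j\|g_{k_j}\|_{L^p}^p\Bigr)^{1/p},\Bigl(\sum_j\|g_{k_j}\|_{L^2}^2\Bigr)^{1/2}\Bigr\}\le\Bigl\|\Bigl(\sum_k|g_k|^2\Bigr)^{1/2}\Bigr\|_{L^p}\lesssim\|f\|_{L^p},
\]
so $T_a$ is bounded. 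Now let $h_j$ be a function of $(r_i)_{i\in I_j}$ with $\|h_j\|_{L^p}=1$ and $\|T_{\nu_j}h_j\|_{L^p}\ge\frac12$. Then $f_j:=r_{k_j}h_j$ is weakly null, since its Walsh spectrum lies in $[2^{k_j},2^{k_j+1})$. Yet $\|T_af_j\|_{L^p}=\|T_{\nu_j}h_j\|_{L^p}\ge\frac12$, so $T_a$ is not compact. Because $\int (T_af)g=\int f\,(T_ag)$, the same symbol is a counterexample on $L^{p'}$ with $1<p'<2$. Thus only (i)$\Rightarrow$(ii) for all $p$, and the full equivalence for $p=2$, survive.
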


\begin{proof}
We first prove the equivalence for $1<p<2$.
The case $2<p<\infty$ then follows by duality.

\textbf{(i)$\Rightarrow$(ii).}
Assume the contrary: $a_n\nrightarrow 0$.
Then there exist $\varepsilon>0$ and a sequence of indices $\{n_k\}$ such that
$|a_{n_k}|\ge\varepsilon$ for all $k$.

Set
\[
u_k:=\frac{a_{n_k}}{|a_{n_k}|}\in\mathbb T,\qquad
f_k:=u_k\,W_{n_k}.
\]
Then $\|f_k\|_{L^p}=1$ and
\[
T_af_k = |a_{n_k}|\,W_{n_k}.
\]

For $k\neq j$ compute the norm of the difference. Since $n_k\neq n_j$,
$\int_0^1 W_{n_k}(x)W_{n_j}(x)\,dx=0$, and therefore the sets
\[
E:=\{x\in[0,1]: W_{n_k}(x)=W_{n_j}(x)\},\qquad
F:=\{x\in[0,1]: W_{n_k}(x)\neq W_{n_j}(x)\}
\]
have measures $|E|=|F|=\frac12$.
On $E$ we have
$|a_{n_k}|W_{n_k}-|a_{n_j}|W_{n_j}=\pm\bigl(|a_{n_k}|-|a_{n_j}|\bigr)$,
while on $F$ we have
$|a_{n_k}|W_{n_k}-|a_{n_j}|W_{n_j}=\pm\bigl(|a_{n_k}|+|a_{n_j}|\bigr)$.
Hence,
\[
\|T_af_k-T_af_j\|_{L^p}^p
= \frac12\bigl||a_{n_k}|-|a_{n_j}|\bigr|^p
 + \frac12\bigl(|a_{n_k}|+|a_{n_j}|\bigr)^p.
\]
Since $|a_{n_k}|,|a_{n_j}|\ge\varepsilon$, we have $|a_{n_k}|+|a_{n_j}|\ge 2\varepsilon$, and thus
\[
\|T_af_k-T_af_j\|_{L^p}^p \ge \frac12(2\varepsilon)^p
= 2^{p-1}\varepsilon^p,
\]
i.e.,
\[
\|T_af_k-T_af_j\|_{L^p}\ge 2^{1-1/p}\varepsilon\qquad(k\neq j).
\]
Therefore, the sequence $\{T_af_k\}$ has no convergent subsequence in $L^p$,
which contradicts the compactness of $T_a$. Hence, $a_n\to0$.

\medskip
\textbf{(ii)$\Rightarrow$(i).}
Let $a_n\to0$.
For $N\in\mathbb{N}$ consider the truncations
\[
T_a^{(N)}f := \sum_{n=0}^N a_n\,\hat{f}(n)\,W_n.
\]
Each $T_a^{(N)}$ has finite rank, and consequently is compact in $\Lp$.

We show that $T_a^{(N)}\to T_a$ in the operator norm $\Lp\to\Lp$.
Let $f\in\Lp$. Then
\[
(T_a-T_a^{(N)})f=\sum_{n>N} a_n\,\hat{f}(n)\,W_n.
\]
Set $c_n:=a_n\,\hat{f}(n)$ for $n>N$.
By the Synthesis Lemma~\ref{lem:synthesis} (for $1<p<2$) we have
\[
\|(T_a-T_a^{(N)})f\|_{L^p}
\le C_S(p)\,\|(c_n)_{n>N}\|_{\ell^{p'}}.
\]
Furthermore,
\[
\|(c_n)_{n>N}\|_{\ell^{p'}}
\le \Bigl(\sup_{n>N}|a_n|\Bigr)\,\|(\hat{f}(n))_{n>N}\|_{\ell^{p'}}
\le \Bigl(\sup_{n>N}|a_n|\Bigr)\,\|\F f\|_{\ell^{p'}}.
\]
By the Hausdorff–Young inequality (Lemma~\ref{lem:HY})
\[
\|\F f\|_{\ell^{p'}}\le C_{HY}(p)\,\|f\|_{L^p}.
\]
Thus,
\[
\|(T_a-T_a^{(N)})f\|_{L^p}
\le C_S(p)\,C_{HY}(p)\,\Bigl(\sup_{n>N}|a_n|\Bigr)\,\|f\|_{L^p},
\]
i.e.,
\[
\|T_a-T_a^{(N)}\|_{L^p\to L^p}
\le C_S(p)\,C_{HY}(p)\,\sup_{n>N}|a_n|\xrightarrow[N\to\infty]{}0.
\]
Consequently, $T_a$ is the norm limit of the compact operators $T_a^{(N)}$,
hence compact.

\medskip
\textbf{Duality step ($2<p<\infty$).}
Let $2<p<\infty$ and set $p'=\frac{p}{p-1}\in(1,2)$.
By the Schauder theorem, a bounded operator is compact if and only if its adjoint is compact.

As in the proof of Lemma~\ref{lem:linftyMultiplier}, we have
\[
(T_a)^* = T_{\overline a}\quad \text{on }L^{p'}[0,1].
\]
Hence $T_a$ is compact on $L^p$ if and only if $T_{\overline a}$ is compact on $L^{p'}$.
Applying the already proved case $1<p'<2$ yields $\overline{a_n}\to0$, and therefore $a_n\to0$.
Conversely, if $a_n\to0$, then $\overline{a_n}\to0$ and compactness of $T_{\overline a}$ on $L^{p'}$
implies compactness of $T_a$ on $L^p$ by the same reflexive duality argument.
\end{proof}
\medskip

\begin{remark}
The obtained compactness criterion
for multiplier operators in the Walsh system
naturally agrees with
the general theory of diagonal operators
in Banach spaces.
In particular, it is known that for operators
diagonal with respect to a Schauder basis,
compactness is equivalent to the tendency
of the diagonal coefficients to zero
under the condition of boundedness of the operator
(see, e.g.,~\cite{LT}).
In this sense, the proof of compactness
via approximation by finite-rank operators
is not a specific trick,
but represents an implementation
of a general functional-analytic principle.

At the same time, the concrete form of this criterion
substantially depends on the properties of the basis.
For the Walsh system, the key roles are played by
the dyadic structure,
the $\pm1$-valuedness of the basis functions,
and the well-definedness of the synthesis operator in $L^p$,
which distinguishes this case
both from the trigonometric system
and from other orthonormal expansions.
Thus, the obtained results
should be considered
as a nontrivial concretization
of the general theory of diagonal operators
in the specific and important context
of harmonic analysis with respect to the Walsh system.
\end{remark}

\subsection{Compactness in the $L^p\to L^2$ regime for $2<p<\infty$}

\begin{theorem}
\label{thm:compactLpL2}
Let $2<p<\infty$ and $T_a:L^p[0,1]\to L^2[0,1]$ be a bounded Walsh multiplier.
Then $T_a$ is compact if and only if $a_n\to 0$ as $n\to\infty$.
\end{theorem}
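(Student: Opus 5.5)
We follow the scheme of Theorem~\ref{thm:compactLp}, replacing the target space $L^p$ by $L^2$. Since $p>2$, the inclusion $L^p[0,1]\hookrightarrow L^2[0,1]$ is continuous with norm $1$. In $L^2$ the Walsh system is orthonormal, so Parseval's identity gives
\[
\|T_af\|_{L^2}^2=\sum_{n\ge0}|a_n|^2|\hat f(n)|^2 .
\]

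\textbf{Necessity.} Assume $T_a$ is compact but $a_n\nrightarrow0$. Choose $\varepsilon>0$ and indices $n_k$ with $|a_{n_k}|\ge\varepsilon$, and set $f_k:=u_kW_{n_k}$ with $u_k=a_{n_k}/|a_{n_k}|$. Then $\|f_k\|_{L^p}=1$, because $|W_{n_k}|=1$ a.e. We have $T_af_k=|a_{n_k}|W_{n_k}$, and by orthonormality
\[
\|T_af_k-T_af_j\|_{L^2}^2=|a_{n_k}|^2+|a_{n_j}|^2\ge2\varepsilon^2 .
\]
So $\{T_af_k\}$ has no Cauchy subsequence in $L^2$, which contradicts compactness. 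This direction is routine; if preferred, Lemma~\ref{lem:WalshDistance} can be used in place of orthonormality, as in the proof of Theorem~\ref{thm:compactLp}.

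\textbf{Sufficiency.} Let $a_n\to0$ and define the finite-rank truncations $T_a^{(N)}f=\sum_{n\le N}a_n\hat f(n)W_n$. Each $T_a^{(N)}$ is bounded $L^p\to L^2$, since $|\hat f(n)|\le\|f\|_{L^1}\le\|f\|_{L^p}$. By Parseval and Bessel's inequality,
\[
\|(T_a-T_a^{(N)})f\|_{L^2}^2=\sum_{n>N}|a_n|^2|\hat f(n)|^2\le\Bigl(\sup_{n>N}|a_n|\Bigr)^2\|f\|_{L^2}^2\le\Bigl(\sup_{n>N}|a_n|\Bigr)^2\|f\|_{L^p}^2 .
\]
Hence $\|T_a-T_a^{(N)}\|_{L^p\to L^2}\le\sup_{n>N}|a_n|\to0$. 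Being a norm limit of finite-rank operators, $T_a$ is compact. No Hausdorff--Young or synthesis constant is needed here, since Bessel's inequality in $L^2$ together with the embedding $L^p\subset L^2$ does all the work.

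\textbf{Main obstacle.} The only delicate point is interpreting "bounded Walsh multiplier" correctly. We must check that the operator $T_a$ given in the hypothesis agrees with the formal series $\sum a_n\hat f(n)W_n$ on all of $L^p$, and not only on Walsh polynomials. Once that is settled, the Parseval identity used in both directions is legitimate for every $f\in L^p$.

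We handle this by density. On Walsh polynomials, $T_a$ and the formal series clearly coincide. Boundedness of $T_a:L^p\to L^2$ then forces $\widehat{T_af}(n)=a_n\hat f(n)$ for all $f\in L^p$, because the coefficient functional $g\mapsto\hat g(n)$ is continuous on $L^2$ and $f\mapsto\hat f(n)$ is continuous on $L^p$.

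Note that the sufficiency argument uses only $a\in\ell^\infty$, which follows from $a_n\to0$. Necessity uses only the test functions $W_{n_k}$, so the identification issue affects neither direction in an essential way.
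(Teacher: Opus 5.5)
Your proof is correct and follows the paper's argument essentially verbatim: orthonormality for necessity, and finite-rank truncations plus Parseval and the norm-one embedding $L^p\hookrightarrow L^2$ for sufficiency; your density argument giving $\widehat{T_af}(n)=a_n\hat f(n)$ on all of $L^p$ makes explicit a step the paper leaves implicit. One small slip: with $u_k=a_{n_k}/|a_{n_k}|$ you get $T_af_k=(a_{n_k}^2/|a_{n_k}|)W_{n_k}$, not $|a_{n_k}|W_{n_k}$ (that would need $u_k=\overline{a_{n_k}}/|a_{n_k}|$), but this is harmless because the $L^2$ orthonormality computation holds for arbitrary phases, and the paper simply takes $f_k=W_{n_k}$.
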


\begin{proof}
First note that for $p>2$ we have the continuous embedding
$L^p[0,1]\hookrightarrow L^2[0,1]$:
\[
\|f\|_{L^2}\le \|f\|_{L^p},
\]
since the measure of $[0,1]$ equals $1$.

\medskip
\textbf{Necessity.}
Assume that $T_a$ is compact but $a_n\nrightarrow 0$.
Then there exist $\varepsilon>0$ and a sequence of indices $\{n_k\}$ such that
$|a_{n_k}|\ge \varepsilon$ for all $k$.
Set $f_k:=W_{n_k}$. Then $\|f_k\|_{L^p}=1$ and
\[
T_af_k=a_{n_k}W_{n_k}.
\]
For $k\neq j$ estimate the distance in $L^2$ using orthonormality:
\[
\|T_af_k-T_af_j\|_{L^2}^2
=\|a_{n_k}W_{n_k}-a_{n_j}W_{n_j}\|_{L^2}^2
=|a_{n_k}|^2+|a_{n_j}|^2
\ge 2\varepsilon^2.
\]
Hence,
\[
\|T_af_k-T_af_j\|_{L^2}\ge \sqrt{2}\,\varepsilon>0
\qquad (k\neq j).
\]

Thus, $\{T_af_k\}$ contains no convergent subsequence in $L^2$,
which contradicts the compactness of $T_a$ as an operator $L^p\to L^2$.
Consequently, $a_n\to 0$.

\medskip
\textbf{Sufficiency.}
Let $a_n\to 0$.
Define the truncations
\[
T_a^{(N)}f:=\sum_{n=0}^N a_n\,\hat{f}(n)\,W_n.
\]
Each $T_a^{(N)}$ has finite rank as an operator $L^p\to L^2$
(its image lies in $\mathrm{span}\{W_0,\dots,W_N\}\subset L^2$),
hence is compact.

Estimate the tail for $f\in L^p$.
By orthonormality of $\{W_n\}$ in $L^2$ and Parseval's identity we obtain
\begin{align*}
\|(T_a-T_a^{(N)})f\|_{L^2}^2
&=\left\|\sum_{n>N} a_n\,\hat{f}(n)\,W_n\right\|_{L^2}^2
=\sum_{n>N}|a_n|^2\,|\hat{f}(n)|^2\\
&\le \left(\sup_{n>N}|a_n|^2\right)\sum_{n>N}|\hat{f}(n)|^2
\le \left(\sup_{n>N}|a_n|^2\right)\sum_{n\ge0}|\hat{f}(n)|^2\\
&=\left(\sup_{n>N}|a_n|^2\right)\|f\|_{L^2}^2
\le \left(\sup_{n>N}|a_n|^2\right)\|f\|_{L^p}^2.
\end{align*}
Therefore,
\[
\|(T_a-T_a^{(N)})f\|_{L^2}\le \Bigl(\sup_{n>N}|a_n|\Bigr)\,\|f\|_{L^p},
\]
i.e.,
\[
\|T_a-T_a^{(N)}\|_{L^p\to L^2}\le \sup_{n>N}|a_n|\xrightarrow[N\to\infty]{}0.
\]
Hence, $T_a$ is the norm limit of the compact operators $T_a^{(N)}$,
and therefore compact.
\end{proof}

\begin{remark}
The proof in the $L^p\to L^2$ regime essentially uses the Hilbert space structure of the target space $L^2$:
in particular, Parseval's equality and diagonality with respect to the orthonormal Walsh basis.
\end{remark}

\section{Spectral properties}
\label{sec:spectral}

\subsection{Point spectrum}

\begin{proposition}
\label{prop:point}
Let $T_a$ be a Walsh multiplier, properly defined as a bounded operator in $\Lp$, $1<p<\infty$.
Then for each $n\in\Nzero$ the number $a_n$ belongs to the point spectrum of $T_a$,
and $W_n$ is an eigenfunction:
\[
T_aW_n=a_nW_n.
\]
\end{proposition}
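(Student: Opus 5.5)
The plan is to compute $T_aW_n$ directly from the definition of $T_a$ on the dense subspace $\mathcal P$ of Walsh polynomials. Then I will check that this computation agrees with the bounded extension that the proposition assumes. Finally, I will use that $W_n\neq 0$ in $\Lp$.

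\textbf{Step 1.} First compute the Walsh--Fourier coefficients of $W_n$. Orthonormality gives $\widehat{W_n}(m)=\int_0^1 W_n W_m\,dx=\delta_{nm}$. Since $W_n\in\mathcal P$, the formal series defining $T_aW_n$ has exactly one nonzero term, so
\[
T_aW_n=\sum_{m\ge0}a_m\,\delta_{nm}\,W_m=a_nW_n .
\]

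\textbf{Step 2.} Next, confirm that the bounded operator $T_a:\Lp\to\Lp$ agrees with the formal multiplier on $\mathcal P$. This is the only point needing care, since $T_a$ is an extension defined either by continuity or, for $p>2$, by duality as in Lemma~\ref{lem:linftyMultiplier}. In the duality case, take any $g\in L^{p'}$ and write $\langle T_aW_n,g\rangle=\langle W_n,T_{\overline a}g\rangle$. The Walsh coefficient of $T_{\overline a}g$ at $n$ equals $\overline{a_n}\,\hat g(n)$. To justify this, approximate $g$ by its Walsh partial sums; these converge in $L^{p'}$ by the Schauder basis property. Continuity of $T_{\overline a}$ together with continuity of the coefficient functional $h\mapsto\hat h(n)$ on $L^{p'}$ then gives the formula. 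Hence $\langle T_aW_n,g\rangle=a_n\overline{\hat g(n)}=\langle a_nW_n,g\rangle$ for every $g\in L^{p'}$, and therefore $T_aW_n=a_nW_n$ in $\Lp$. For $1<p\le 2$ the identity holds directly, because $T_a$ is the continuous extension from $\mathcal P$ and $W_n\in\mathcal P$.

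\textbf{Step 3.} Since $|W_n|=1$ almost everywhere, we have $\|W_n\|_{L^p}=1$, so $W_n\neq0$. Consequently $T_a-a_nI$ has the nontrivial kernel vector $W_n$, which means $a_n\in\sigma_p(T_a)$ with eigenfunction $W_n$.

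I expect the main obstacle to be Step 2, namely ensuring that the abstractly defined bounded operator acts on $W_n$ as the formula says. It is resolved by the consistency on $\mathcal P$ already noted in Step 3 of the proof of Lemma~\ref{lem:linftyMultiplier}, together with continuity of the coefficient functionals.
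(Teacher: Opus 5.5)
Your proposal is correct and takes the same route as the paper, which simply computes $\widehat{W_n}(k)=\delta_{nk}$ and reads off $T_aW_n=a_nW_n$ from the defining series. Your Step 2 (checking that the bounded extension, including the duality-defined operator for $p>2$, acts on $W_n$ by this formula) and Step 3 ($\|W_n\|_{L^p}=1$, so $W_n\neq0$) supply rigor the paper leaves implicit, and both are sound.
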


\begin{proof}
Directly from the definition of the multiplier:
\[
\hat{W_n}(k)=\int_0^1 W_n(x)W_k(x)\,dx=\delta_{nk},
\]
hence
\[
T_aW_n=\sum_{k=0}^\infty a_k\,\hat{W_n}(k)\,W_k=a_nW_n.
\]
\end{proof}

\subsection{Spectrum in the case $p=2$}

In the Hilbert space case $L^2[0,1]$ the Walsh multiplier is a diagonal operator
with respect to the orthonormal basis $\{W_n\}$.

\begin{theorem}[Complete description of the spectrum for $p=2$]
\label{thm:spectrumL2}
Let $T_a:L^2[0,1]\to L^2[0,1]$ be a bounded Walsh multiplier.
Then
\[
\sigma(T_a)=\overline{\{a_n:\,n\ge 0\}}.
\]
Moreover, $\lambda\in\rho(T_a)$ if and only if $\inf_{n\ge0}|a_n-\lambda|>0$,
and in this case
\[
\|(T_a-\lambda I)^{-1}\|_{L^2\to L^2}=\sup_{n\ge0}\frac{1}{|a_n-\lambda|}.
\]
\end{theorem}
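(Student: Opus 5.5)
We prove two inclusions and the resolvent formula at once. Since $\{W_n\}$ is an orthonormal basis of $L^2[0,1]$, the map $f\mapsto \F f$ is unitary onto $\ell^2$. Under it, $T_a$ becomes multiplication by $a$ on $\ell^2$. Boundedness of $T_a$ on $L^2$ forces $a\in\ell^\infty$, because $\|T_aW_n\|_{L^2}=|a_n|$ by Proposition~\ref{prop:point}. It therefore suffices to analyse the diagonal operator $D_a:(c_n)\mapsto(a_nc_n)$ on $\ell^2$. Note also that $T_a-\lambda I=T_{a-\lambda}$, where $a-\lambda$ denotes the sequence $(a_n-\lambda)$.

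\textbf{Step 1: if $\delta:=\inf_n|a_n-\lambda|>0$, then $\lambda\in\rho(T_a)$.} Set $b_n:=(a_n-\lambda)^{-1}$, so that $\|b\|_{\ell^\infty}=1/\delta$. The multiplier $T_b$ is bounded on $L^2$ with
\[
\|T_bf\|_{L^2}^2=\sum_n|b_n|^2|\hat f(n)|^2\le \|b\|_{\ell^\infty}^2\|f\|_{L^2}^2 ,
\]
by Parseval. On Walsh polynomials one checks directly that $T_bT_{a-\lambda}=T_{a-\lambda}T_b=I$. By density and continuity these identities extend to all of $L^2$, so $(T_a-\lambda I)^{-1}=T_b$ and $\|(T_a-\lambda I)^{-1}\|\le\sup_n|a_n-\lambda|^{-1}$. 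For the reverse inequality, use $T_bW_n=b_nW_n$ with $\|W_n\|_{L^2}=1$; this gives $\|T_b\|\ge|b_n|$ for every $n$, hence equality in the norm formula.

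\textbf{Step 2: if $\inf_n|a_n-\lambda|=0$, then $\lambda\in\sigma(T_a)$.} If $\lambda=a_m$ for some $m$, this is immediate from Proposition~\ref{prop:point}. Otherwise, choose indices $n_k$ with $|a_{n_k}-\lambda|\to0$. Then
\[
\|(T_a-\lambda I)W_{n_k}\|_{L^2}=|a_{n_k}-\lambda|\to0 \quad\text{while}\quad \|W_{n_k}\|_{L^2}=1,
\]
so $T_a-\lambda I$ is not bounded below and cannot have a bounded inverse. This also shows the ``only if'' direction of the resolvent characterization.

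\textbf{Step 3: conclusion.} The condition $\inf_n|a_n-\lambda|=0$ is exactly $\lambda\in\overline{\{a_n:n\ge0\}}$. Steps 1 and 2 therefore give $\sigma(T_a)=\overline{\{a_n\}}$, together with the stated description of $\rho(T_a)$. The only point needing care is the resolvent norm identity. Here the lower bound from eigenvectors must be combined with the upper bound from Parseval; this works precisely because $p=2$. In the $L^p$ setting only the constant $C_SC_{HY}$ of Lemma~\ref{lem:linftyMultiplier} would be available, so no exact formula could be obtained there.
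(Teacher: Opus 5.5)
Your proof is correct and follows the same route as the paper: the inverse is the multiplier with symbol $(a_n-\lambda)^{-1}$, bounded via Parseval, and the Walsh functions $W_{n_k}$ serve as approximate eigenvectors when $\inf_n|a_n-\lambda|=0$. You are slightly more complete than the paper. The paper proves only $\|(T_a-\lambda I)^{-1}\|\le\sup_n|a_n-\lambda|^{-1}$ and asserts equality, whereas your eigenvector identity $T_bW_n=b_nW_n$ gives the reverse inequality, and you check the two-sided inverse identity on Walsh polynomials explicitly.
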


\begin{proof}
Let $\lambda\in\mathbb{C}$.
Consider the operator $T_a-\lambda I$, which acts as a multiplier with symbol $(a_n-\lambda)$.
In the basis $\{W_n\}$ it has diagonal form:
\[
(T_a-\lambda I)\left(\sum_{n\ge0} c_nW_n\right)=\sum_{n\ge0}(a_n-\lambda)c_nW_n,
\]
for all vectors from the dense subspace of finite linear combinations.

\smallskip
\textbf{1) If $\inf_n|a_n-\lambda|>0$, then $\lambda\in\rho(T_a)$.}
Formally define
\[
R_\lambda f:=\sum_{n\ge0}\frac{\hat{f}(n)}{a_n-\lambda}\,W_n.
\]
By Parseval,
\[
\|R_\lambda f\|_{L^2}^2=\sum_{n\ge0}\left|\frac{\hat{f}(n)}{a_n-\lambda}\right|^2
\le \left(\sup_{n\ge0}\frac{1}{|a_n-\lambda|^2}\right)\sum_{n\ge0}|\hat{f}(n)|^2
=
\left(\sup_{n\ge0}\frac{1}{|a_n-\lambda|^2}\right)\|f\|_{L^2}^2.
\]
Hence $R_\lambda$ is a bounded operator and is the inverse of $T_a-\lambda I$,
i.e., $\lambda\in\rho(T_a)$, and the resolvent norm equals the indicated supremum estimate.

\smallskip
\textbf{2) If $\inf_n|a_n-\lambda|=0$, then $\lambda\in\sigma(T_a)$.}
Then there exists a sequence $n_k$ such that $|a_{n_k}-\lambda|\to0$.
Consider $f_k:=W_{n_k}$; then $\|f_k\|_{L^2}=1$, and
\[
\|(T_a-\lambda I)f_k\|_{L^2}=\|(a_{n_k}-\lambda)W_{n_k}\|_{L^2}=|a_{n_k}-\lambda|\to0.
\]
Consequently, $T_a-\lambda I$ cannot be boundedly invertible (otherwise from $\|(T_a-\lambda I)f_k\|\to0$
it would follow that $\|f_k\|\to0$), hence $\lambda\in\sigma(T_a)$.

\smallskip
Combining (1) and (2), we obtain the description of the spectrum:
$\sigma(T_a)=\overline{\{a_n\}}$.
\end{proof}

\subsection{Spectral inclusions for $p\neq 2$}

For $p\neq2$ we do not claim complete coincidence of the spectrum with the closure of the diagonal,
since for operators on Banach spaces the "diagonal form" with respect to a Schauder basis
by itself does not guarantee spectral completeness (see discussions of general spectral theory in~\cite{Conway,DunfordSchwartz}).

Nevertheless, one inclusion is stable and sufficiently general.

\begin{theorem}[Lower estimate of the spectrum]
\label{thm:incl}
Let $1<p<\infty$ and $T_a:\Lp\to\Lp$ be a bounded Walsh multiplier.
Then
\[
\overline{\{a_n:\,n\ge 0\}}\subset \sigma(T_a).
\]
\end{theorem}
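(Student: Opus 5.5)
The plan is to combine Proposition~\ref{prop:point} with the closedness of the spectrum. The whole argument is soft functional analysis and never uses $p=2$.

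First, by Proposition~\ref{prop:point}, for every $n\in\Nzero$ we have $T_aW_n=a_nW_n$ with $W_n\neq0$ in $\Lp$ (indeed $\|W_n\|_{L^p}=1$). Hence $T_a-a_nI$ has a nontrivial kernel, so it is not injective and in particular not invertible. Therefore $a_n\in\sigma_p(T_a)\subset\sigma(T_a)$ for all $n$, which gives
\[
\{a_n:\,n\ge0\}\subset\sigma(T_a).
\]

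Second, we pass to the closure. For a bounded operator on a Banach space the spectrum is closed, because the resolvent set is open: if $T_a-\mu I$ is invertible and $|\lambda-\mu|<\|(T_a-\mu I)^{-1}\|^{-1}$, then a Neumann series shows that $T_a-\lambda I$ is invertible. Since $\sigma(T_a)$ is closed and contains $\{a_n\}$, it contains $\overline{\{a_n:\,n\ge0\}}$, as claimed.

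For readers who want an argument that does not quote the general fact, one can use an approximate-eigenvector argument instead, as in part~2) of the proof of Theorem~\ref{thm:spectrumL2}. If $\lambda$ is a limit point, choose $n_k$ with $a_{n_k}\to\lambda$. Then $\|(T_a-\lambda I)W_{n_k}\|_{L^p}=|a_{n_k}-\lambda|\,\|W_{n_k}\|_{L^p}\to0$ while $\|W_{n_k}\|_{L^p}=1$, so $T_a-\lambda I$ is not bounded below and hence not boundedly invertible. This version works verbatim in $\Lp$ because $|W_n|=1$ a.e., so no Parseval identity is needed.

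There is no real obstacle here. The only point to be careful about is that we claim only the inclusion, not equality. The reverse inclusion would require bounded invertibility of the multiplier with symbol $(a_n-\lambda)^{-1}$ on $\Lp$. Lemma~\ref{lem:linftyMultiplier} gives this for $\ell^\infty$ symbols, but the bounds used there must be checked against the paper's conventions, so that direction is deliberately not addressed.
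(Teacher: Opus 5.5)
Your proof is correct, but it does not follow the paper's route.

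You obtain $\{a_n\}\subset\sigma_p(T_a)\subset\sigma(T_a)$ directly from the eigenrelation $T_aW_n=a_nW_n$ of Proposition~\ref{prop:point}. You then pass to the closure, either by closedness of the spectrum or by the approximate eigenvectors $W_{n_k}$, using nothing beyond $\|W_n\|_{L^p}=1$. No Hausdorff--Young or synthesis estimate enters, and the argument works the same way for every $p$.

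The paper instead fixes $\lambda\notin\overline{\{a_n\}}$ and forms $b_n=(a_n-\lambda)^{-1}\in\ell^\infty$. It invokes Lemma~\ref{lem:linftyMultiplier} to make $T_b$ bounded, and checks that $T_b(T_a-\lambda I)=I=(T_a-\lambda I)T_b$. What that argument actually shows is $\mathbb{C}\setminus\overline{\{a_n\}}\subset\rho(T_a)$, i.e.\ $\sigma(T_a)\subset\overline{\{a_n\}}$. This is the \emph{reverse} of the stated inclusion. The closing sentence calling it ``equivalent to $\overline{\{a_n\}}\subset\sigma(T_a)$'' is a logical slip, so the stated inclusion is in fact established only by an argument like yours.

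Your hesitation about the reverse direction is also justified. Applied to sign sequences $b_n=\pm1$, Lemma~\ref{lem:linftyMultiplier} would make the Walsh system an unconditional basis of $\Lp$, which the paper's own introduction says fails for $p\neq2$. The underlying synthesis bound of Lemma~\ref{lem:synthesis} already breaks down on the Rademacher functions $W_{2^k}$ by Khintchine's inequality, since $\ell^{p'}\not\subset\ell^2$ for $p'>2$.

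So the paper's approach, even with its logic repaired, would give at best the opposite inclusion, and only via an unreliable lemma. Yours proves exactly the claimed statement with no additional input.
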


\begin{proof}
Let $\lambda\in\mathbb{C}$ be such that $\lambda\notin \overline{\{a_n:\,n\ge0\}}$.
Define
\[
\delta:=\operatorname{dist}\!\bigl(\lambda,\overline{\{a_n:\,n\ge0\}}\bigr).
\]
Since $\lambda\notin \overline{\{a_n:\,n\ge0\}}$, we have $\delta>0$, and hence
\[
|a_n-\lambda|\ge \delta \qquad \text{for all }n\ge0.
\]

Define the sequence
\[
b_n:=\frac{1}{a_n-\lambda},\qquad n\ge0.
\]
Then $b\in \ell^\infty$ and $\|b\|_{\ell^\infty}\le \delta^{-1}$.
By Lemma~\ref{lem:linftyMultiplier} the operator $T_b$ is bounded on $\Lp$.

We show that $T_b$ is the inverse of $T_a-\lambda I$.
It suffices to verify this on the dense subspace
$\mathcal{P}:=\mathrm{span}\{W_n:\,n\ge0\}$ of finite linear combinations of Walsh functions.
Let
\[
f=\sum_{n=0}^N c_n W_n\in\mathcal{P}.
\]
Then, using the diagonality of multipliers on the basis $\{W_n\}$, we obtain
\[
(T_a-\lambda I)f=\sum_{n=0}^N (a_n-\lambda)c_n W_n,
\]
and consequently,
\[
T_b(T_a-\lambda I)f
=\sum_{n=0}^N b_n(a_n-\lambda)c_n W_n
=\sum_{n=0}^N c_n W_n=f.
\]
Similarly,
\[
(T_a-\lambda I)T_b f
=\sum_{n=0}^N (a_n-\lambda)b_n c_n W_n
=\sum_{n=0}^N c_n W_n=f.
\]
Thus, on $\mathcal{P}$ we have
\[
T_b(T_a-\lambda I)=I=(T_a-\lambda I)T_b.
\]
Since $\mathcal{P}$ is dense in $\Lp$, and all operators are bounded on $\Lp$,
these equalities extend by continuity to the whole $\Lp$.
Therefore, $T_a-\lambda I$ is invertible in $\mathcal{B}(\Lp)$ and
\[
(T_a-\lambda I)^{-1}=T_b,
\]
i.e., $\lambda\in\rho(T_a)$.

Therefore $\lambda\in\rho(T_a)$, i.e. $\mathbb{C}\setminus \overline{\{a_n\}}\subset \rho(T_a)$,
which is equivalent to $\overline{\{a_n\}}\subset \sigma(T_a)$.

\end{proof}

\begin{remark}[On completeness of the spectrum description for $p\neq2$]
For $p=2$ the inclusion of Theorem~\ref{thm:incl} becomes an equality
$\sigma(T_a)=\overline{\{a_n\}}$ (Theorem~\ref{thm:spectrumL2}).
For $p\neq2$ the formal "diagonality" with respect to a Schauder basis by itself
does not guarantee that the spectrum is exhausted by the closure of the diagonal without additional structural assumptions
(see discussions of diagonal operators in Banach spaces and the role of the basis in~\cite{Conway,DunfordSchwartz,LT,Wojtaszczyk}).
In the present work we therefore fix only the strictly provable inclusion.
\end{remark}

\subsection{Compact multipliers and spectrum}

\begin{proposition}
\label{prop:compactSpectrum}
Let $1<p<\infty$ and $T_a:\Lp\to\Lp$ be a compact Walsh multiplier.
Then $0\in\sigma(T_a)$, and the set of nonzero spectral values is (possibly empty) finite or countable,
each nonzero spectral value being an eigenvalue of finite multiplicity,
and the only possible accumulation point of the nonzero spectrum is $0$. 
\end{proposition}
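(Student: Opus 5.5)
This is the Riesz–Schauder theory for compact operators on a Banach space, applied to $X=\Lp$. The Walsh structure enters only to confirm that the space is infinite-dimensional, and optionally to identify the eigenvalues. The plan has three steps.

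\textbf{Step 1: $0\in\sigma(T_a)$.} Since $\Lp$ is infinite-dimensional (it contains the linearly independent family $\{W_n\}$), a compact operator cannot be invertible in $\mathcal B(\Lp)$. If it were, $I=T_a^{-1}T_a$ would be compact, and the closed unit ball would be compact, contradicting Riesz's lemma. Alternatively, one can argue concretely. By Theorem~\ref{thm:compactLp}, $a_n\to0$. Hence $0\in\overline{\{a_n\}}$, and Theorem~\ref{thm:incl} gives $0\in\sigma(T_a)$. I would present the concrete argument and mention the abstract one.

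\textbf{Step 2: nonzero spectral values are eigenvalues of finite multiplicity.} Fix $\lambda\neq0$. Write $T_a-\lambda I=-\lambda(I-\lambda^{-1}T_a)$, so the problem reduces to the Fredholm alternative for $I-K$ with $K=\lambda^{-1}T_a$ compact. The kernel $\ker(T_a-\lambda I)$ is finite-dimensional: on this kernel $T_a$ acts as $\lambda I$, so its unit ball is the image under $\lambda^{-1}T_a$ of a bounded set and is therefore relatively compact. The range of $T_a-\lambda I$ is closed. By the Riesz theory (Fredholm index zero), $T_a-\lambda I$ is injective if and only if it is surjective. Consequently, if $\lambda\in\sigma(T_a)\setminus\{0\}$, then $T_a-\lambda I$ fails to be injective, so $\lambda$ is an eigenvalue. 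I would cite these facts from \cite{Conway,DunfordSchwartz} rather than reprove them.

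\textbf{Step 3: countability and accumulation only at $0$.} The standard Riesz lemma argument applies. Suppose there were infinitely many distinct eigenvalues $\lambda_k$ with $|\lambda_k|\ge\varepsilon$. Choose eigenvectors $x_k$; they are linearly independent. Set $M_k=\mathrm{span}\{x_1,\dots,x_k\}$, which satisfies $(T_a-\lambda_k)M_k\subset M_{k-1}$. Riesz's lemma gives unit vectors $y_k\in M_k$ with $\operatorname{dist}(y_k,M_{k-1})\ge 1/2$. Then $\|T_ay_k-T_ay_j\|\ge\varepsilon/2$ for $j<k$, which contradicts compactness. Hence for each $\varepsilon>0$ only finitely many spectral values satisfy $|\lambda|\ge\varepsilon$. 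Taking $\varepsilon=1/m$ yields that the nonzero spectrum is at most countable, with $0$ as its only possible accumulation point.

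One could also note that every $a_n\neq0$ is such an eigenvalue, by Proposition~\ref{prop:point}. Since $a_n\to0$, each nonzero value is attained by only finitely many $n$, which is consistent with finite multiplicity.

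The main obstacle is Step 2, the claim that nonzero spectrum consists only of eigenvalues. In the Banach (non-Hilbert) setting this requires the closed-range and index-zero part of the Fredholm alternative. I would invoke it as a standard theorem rather than attempt a Walsh-specific proof. A diagonal argument would effectively require showing $\sigma(T_a)=\overline{\{a_n\}}$ for $p\neq2$, which the paper deliberately avoids.
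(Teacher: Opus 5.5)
Your proposal is correct and follows the paper's route: the paper likewise just invokes the Riesz--Schauder theory, with $0\in\sigma(T_a)$ coming from infinite-dimensionality, and your Steps 2--3 sketch the standard proofs of those cited facts. If you present the concrete argument in Step 1, justify $\overline{\{a_n\}}\subset\sigma(T_a)$ directly from Proposition~\ref{prop:point} and the closedness of the spectrum rather than by citing Theorem~\ref{thm:incl}, because its printed proof shows that $\lambda\notin\overline{\{a_n\}}$ implies $\lambda\in\rho(T_a)$, which is the opposite inclusion $\sigma(T_a)\subset\overline{\{a_n\}}$, and it relies on Lemma~\ref{lem:linftyMultiplier}, which cannot hold for $p\neq2$ since the Walsh system is not unconditional there.
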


\begin{proof}
This is a standard property of compact operators on infinite-dimensional Banach spaces;
see, e.g.,~\cite{Conway,DunfordSchwartz}.
Since $\Lp$ is infinite-dimensional and $T_a$ is compact, $0\in\sigma(T_a)$.
The remaining statements follow from the general Riesz–Schauder theorem on the spectrum of a compact operator.
\end{proof}

\begin{remark}
Proposition~\ref{prop:point} shows that each $a_n$ is an eigenvalue (provided $T_a$ is defined on $\Lp$).
Combined with compactness (Theorem~\ref{thm:compactLp}) this agrees with the fact
that the only possible accumulation point of the set $\{a_n\}$ is $0$.
\end{remark}

\section{Examples, discussion, and boundary cases}
\label{sec:examples}

\subsection{Simple examples of symbols}

\begin{example}[compact multiplier]
Let $a_n=\frac{1}{n+1}$.
Then $a_n\to0$.
If $T_a$ is bounded in the considered regime (e.g., $1<p<2$ under the conditions from~\cite{BoundednessWalsh}),
then by Theorem~\ref{thm:compactLp} the operator $T_a$ is compact in $\Lp$.
\end{example}

\begin{example}[non-compact multiplier]
Let $a_n=(-1)^n$.
Then $a_n$ does not tend to zero, hence (provided boundedness of $T_a$) the operator $T_a$ is not compact in $\Lp$.
Similarly, in the $L^p\to L^2$ regime for $p>2$ non-compactness follows from Theorem~\ref{thm:compactLpL2}.
\end{example}

\subsection{Comparison with the trigonometric case (context)}

For trigonometric multipliers in $L^p(\mathbb{T})$ a crucial role is played by
Marcinkiewicz-type criteria and their refinements/limitations,
as well as the rich phenomenology of phase multipliers (see~\cite{Zygmund1959,SteinHA,GrafakosCF}).
In the dyadic context, Walsh multipliers often allow one to formulate results
in a more "rigid" diagonal form thanks to the structure of the basis and peculiarities of the Walsh transform
(see \cite{GolubovEfimovSkvortsov1991,SchippWadeSimon1990} and the boundedness survey in~\cite{BoundednessWalsh}).

\subsection{Boundary cases $p=1$ and $p=\infty$}

The present article deals with $1<p<\infty$.
The boundary cases $p=1$ and $p=\infty$ require separate tools.
In particular, even the definition of a "good" class of multipliers in $L^1$
is often naturally formulated in terms of the weak Lorentz spaces $L^{1,\infty}$
or through conditions on the kernel in dyadic convolution.
Similarly, for $L^\infty$ convenient criteria are given via the convolution kernel and membership in $L^1$
(cf. general facts of harmonic analysis on compact groups; see~\cite{SteinHA}).
We leave these questions outside the scope of the article, but note that in the dyadic case they are especially rich,
and a correct theory of compactness on the boundaries requires a separate investigation.

\section{Conclusion}

We have obtained strict compactness criteria for multiplier operators in the Walsh system
in the spaces $L^p[0,1]$.
In the $L^p\to L^p$ regime for $1<p<\infty$ and in the $L^p\to L^2$ regime for $2<p<\infty$, compactness is equivalent to the condition $a_n\to0$.
The proofs are based on approximation by finite-dimensional truncations and precise tail estimates,
where the Hausdorff–Young inequality and the synthesis theorem for Walsh play a key role.

In the spectral part we gave a complete description of the spectrum in the Hilbert space case $p=2$:
$\sigma(T_a)=\overline{\{a_n\}}$,
while for $p\neq2$ we formulated and proved reliable inclusions that do not rely
on implicit and potentially incorrect transfers of "diagonality" beyond $L^2$.
Further research may include clarifying conditions under which equality of the spectrum and the closure of the diagonal holds in $L^p$, $p\neq2$,
as well as analysis of quasi-spectral effects and boundary regimes $p=1,\infty$.


\end{document}